\newcommand {\ep} {\epsilon}
\newcommand {\ii} {\infty}
\newcommand {\dt} {\delta}
\newcommand {\al} {\alpha}
\newcommand {\bt} {\beta}
\newcommand {\lb} {\lambda}
\newcommand {\su} {\subset}
\newcommand {\mc} {\mathcal}
\newtheorem{teo}{Theorem}[section]
\newtheorem{pro}{Proposition}[section]
\theoremstyle{definition}
\newtheorem{rem}{Remark}[section]
\newtheorem{df}{Definition}[section]
\title{Individual ergodic theorems in noncommutative symmetric spaces}
\keywords{Semifinite von Neumann algebra, Dunford-Schwartz operator, noncommutative symmetric space, individual ergodic theorem}
\subjclass[2010]{47A35(primary), 46L52(secondary)}
\begin{document}
\date{April 1, 2016}

\begin{abstract}
It is known that, for a positive Dunford-Schwartz operator in a noncommutative $L^p-$space, $1\leq p<\ii$ or,
more generally, in a noncommutative Orlicz space with order continuous norm, the
corresponding ergodic averages converge bilaterally almost uniformly.
We show that these averages converge bilaterally almost uniformly in each noncommutative
symmetric space $E$ such that $\mu_t(x) \to 0$  as $t \to 0$ for every $x \in E$,
where $\mu_t(x)$ is a non-increasing rearrangement of $x$.
In particular, these averages converge bilaterally almost uniformly in all noncommutative symmetric
spaces with order continuous norm.
\end{abstract}

\author{Vladimir Chilin}
\address{The National University of Uzbekistan}
\email{vladimirchil@gmail.com}
\author{Semyon Litvinov}
\address{Pennsylvania State University Hazleton}
\email{snl2@psu.edu}

\maketitle

\section{Preliminaries}
Let $\mc M$ be a semifinite von Neumann algebra equipped with a faithful normal semifinite trace $\tau$.
Let $\mc P(\mc M)$ stand for the set of projections in $\mc M$. If $\mathbf 1$ is the identity of $\mc M$ and
$e\in \mc P(\mc M)$, we write $e^{\perp}=\mathbf 1-e$. Denote by $L^0=L^0(\mc M)=L^0(\mc M,\tau)$ the $*$-algebra of
$\tau$-measurable operators affiliated with $\mc M$. Let $\| \cdot \|_{\ii}$ be the uniform norm in $\mc M$.
Equipped with the {\it measure topology} given by the system
$$
V(\ep,\dt)=\{ x\in L^0: \ \| xe\|_{\ii}\leq \dt \text{ \ for some \ } e\in \mc P(\mc M) \text{ \ with \ } \tau(e^{\perp})\leq \ep \},
$$
$\ep>0$, $\dt>0$, $L^0$ is a complete metrizable topological $*$-algebra \cite{ne}.

Let $x\in L^0$, and let $\{ e_{\lb}\}_{\lb\ge 0}$ be the spectral family of projections for the absolute value $| x|$ of $x$.
If $t>0$, then the {\it $t$-th generalized singular number of $x$ (a non-increasing rearrangement of $x$)}
(see \cite{fk}) is defined as
$$
\mu_t(x)=\inf\{\lb>0: \tau(e_{\lb}^{\perp})\leq t\}.
$$

A Banach space $(E, \| \cdot \|_E)\su L^0$ is called {\it fully symmetric} if the conditions
$$
x\in E, \ y\in L^0, \ \int \limits_0^s\mu_t(y)dt\leq  \int \limits_0^s\mu_t(x)dt \text{ \ for all\ }s>0 \ \  (\text {writing } \  y \prec\prec x)
$$
imply that $y\in E$ and $\| y\|_E\leq \| x\|_E$. It is known \cite{ddp} that if $(E, \| \cdot \|_E)$ is a fully symmetric space,
$x_n,x\in E$, and $\|x- x_n\|_E\to 0$, then $x_n\to x$ in measure. A fully symmetric space $(E, \| \cdot \|_E)$ is said to possess
{\it Fatou property} if conditions
$$
x_{\al}\in E^+, \ \ x_{\al}\leq x_{\bt} \text{ \ for } \al \leq \bt, \text{\ and\ }\sup_{\al} \| x_{\al}\|_E<\ii
$$
imply that there exists $x=\sup \limits_{\al}x_{\al}\in E$ and $\| x\|_E=\sup \limits_{\al} \| x_{\al}\|_E$.
A space $(E, \| \cdot \|_E)$ is said to have {\it order continuous norm} if $\| x_{\al}\|_E\downarrow 0$ whenever $x_{\al}\in E$ and $x_{\al}\downarrow 0$.

Let $L^0(0,\ii)$ be the linear space of all (equivalence classes of) almost everywhere finite complex-valued Lebesgue measurable
functions on the interval $(0,\ii)$. We identify $L^{\ii}(0,\ii)$ with the commutative von Neumann algebra acting
on the Hilbert space $L^2(0,\ii)$ via
multiplication by the elements from $L^{\ii}(0,\ii)$ with the trace given by the integration
with respect to Lebesgue measure. A Banach space $E\su L^0(0,\ii)$ is called {\it fully symmetric function space} on $(0,\ii)$
if the condition above holds with respect to the von Neumann algebra $L^{\ii}(0,\ii)$.

Let $E=(E(0,\ii), \| \cdot \|_E)$ be a fully symmetric function space.
For each $s>0$ let $D_s: E \to E$ be the bounded linear operator given by $D_s(f)(t) = f(t/s), \ t > 0$.
The {\it Boyd indices} $p_E$ and $q_E$ are defined as
$$
p_E=\lim\limits_{s\to\infty}\frac{\log s}{\log \|D_s\|_E}, \ \ q_E=\lim\limits_{s \to +0}\frac{\log s}{\log \|D_{s}\|_E}.
$$
It is known that $1\leq p_E\leq q_E\leq \ii$ \cite[II, Ch.2, Proposition 2.b.2]{lt}.
A fully symmetric function space is said to have {\it non-trivial Boyd indices} if $1<p_E$ and $q_E<\ii$.
For example, the space $L^p(0,\ii)$, $1< p<\ii$, have non-trivial Boyd indices:
$$
p_{L^p(0,\infty)} = q_{L^p(0,\infty)} = p
$$
\cite[Ch.4, \S 4, Theorem 4.3]{bs}.

A Banach lattice  $(E,\|\cdot\|_E)$  is called {\it $q$-concave}, $1 \leq q < \infty$, if there exists a 
constant  $M>0$ such that 
$$
\left (\sum_{i=1}^n\|x_i\|^q\right )^{\frac{1}{q}} \leq M \left \| \left (\sum_{i=1}^n|x_i|^q\right )^{\frac{1}{q}}\right \|_E
$$
for every finite set  $\{x_i\}_{i=1}^n \su E$.
Let  $E=(E(0,\ii), \| \cdot \|_E)$ be a fully symmetric function space, and let $E$ be $q$-concave for same $1 \leq q < \infty$. 
Then  there does not exist a sublattice of $E$ isomorphic to $l^{\infty}$, hence the norm $\| \cdot \|_E$ is order continuous \cite[Corollary 2.4.3]{pm}.

\vskip 5 pt
If $E(0,\ii)$ is a fully symmetric function space, define
$$
E(\mc M)=E(\mc M, \tau)=\{ x\in L^0: \ \mu_t(x)\in E\}
$$
and set
$$
\| x\|_{E(\mc M)}=\| \mu_t(x)\|_E,  \ x\in E(\mc M).
$$
It is shown in \cite{ks} that $(E(\mc M), \| \cdot \|_{E(\mc M)})$ is a fully symmetric space.
If $1\leq p<\ii$ and $E=L_p(0,\ii)$, the space $(E(\mc M), \| \cdot \|_{E(\mc M)})$ coincides with the noncommutative
$L^p-$space $L^p=L^p(\mc M)=(L^p(\mc M, \tau), \| \cdot \|_p)$ because
$$
\| x\|_p=\left (\int \limits_0^{\ii}\mu_t^p(x)dt\right )^{1/p}=\| x\|_{L^p(\mc M)}
$$
(see \cite[Proposition 2.4]{ye0}). In addition, $L^{\infty}(\mc M)=\mc M$ and
$$
 (L^1\cap L^{\infty})(\mc M) = L^1(\mc M)\cap \mc M,  \ \|x\|_{L^1\cap L^{\infty}}=\max \{ \|x\|_1, \|x\|_{\mc M}\},
$$
$$
(L^1 + L^\ii)(\mc M) = L^1(\mc M) + \mc M,
$$
$$
\|x\|_{L^1+L^{\infty}}=\inf\{ \|x\|_1+ \|y\|_{\mc M}: \ x \in L^1(\mc M), \ y \in \mc M\}= \int_0^1 \mu_t(x) dt
$$
(see \cite[Proposition 2.5]{ddp}). (For a comprehensive review of noncommutative
$L^p-$spaces, see \cite{ye0, px}.)

Since for a fully symmetric function space $E(0,\ii)$,
$$
L^1(0,\ii) \cap L^{\infty}(0,\ii) \su E \su L^1(0,\ii) + L^{\infty}(0,\ii)
$$
with continuous embeddings \cite[Ch.II, \S 4, Theorem 4.1]{kps}, we also have
$$
L^1(\mc M) \cap \mc M \subset E(\mc M, \tau) \subset L^1(\mc M) + \mc M,
$$
with continuous embeddings.

Denote
$$
L^0_{\tau} = \{x \in L^0(\mc M):  \ \mu_t(x) \rightarrow 0 \text{ \ as \ } t\rightarrow 0\}.
$$
Observe that $L^0_{\tau}$ is a linear subspace of $L^0$ which is solid in the sense that if
$x \in L^0_{\tau}$ and if $y \in  L^0$ and $\mu_t(y) \leq \mu_t(x)$, then also $y \in  L^0_{\tau}$. It is clear that $x \in L^0_{\tau}$  if and only if $\tau(\chi_{(\lambda,\infty)}(|x|)) < \infty$ for all $\lambda > 0$; moreover, $L^1 \cap \mc M \subset L^0_{\tau}$.

Define
$$
\mc R_\tau= \{x \in L^1 + \mc M: \ \mu_t(x) \to 0 \text{ \ as \ } t\to  0 \}.
$$
It follows from the next proposition that $(\mc R_\tau, \|\cdot\|_{L^1+\mc M})$
is a Banach space.

\begin{pro}\cite[Proposition 2.7]{ddp}\label{p11}
$\mc R_\tau$ is the closure of $L^1 \cap \mc M$  in $L^1 + \mc M$.
\end{pro}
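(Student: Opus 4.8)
The plan is to establish the two inclusions $\ol{L^1 \cap \mc M} \su \mc R_\tau$ and $\mc R_\tau \su \ol{L^1 \cap \mc M}$ separately, closures being taken in $(L^1 + \mc M, \|\cdot\|_{L^1 + \mc M})$. Recall that $x \in \mc R_\tau$ exactly when $\tau(\chi_{(\lb, \ii)}(|x|)) < \ii$ for every $\lb > 0$, equivalently $\mu_t(x) \to 0$ as $t \to \ii$, and that $\|x\|_{L^1 + \mc M} = \int_0^1 \mu_t(x)\,dt$. Two standard properties of generalized singular numbers \cite{fk} will be used repeatedly: the subadditivity estimate $\mu_{s+t}(a + b) \le \mu_s(a) + \mu_t(b)$ for $a, b \in L^1 + \mc M$, $s, t > 0$, and the trace formula $\tau(g(|x|)) = \int_0^{\ii} g(\mu_t(x))\,dt$ for Borel functions $g \ge 0$ with $g(0) = 0$.

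First I would verify that $\mc R_\tau$ is closed in $L^1 + \mc M$; since $L^1 \cap \mc M \su L^0_\tau$ (noted above) forces $L^1 \cap \mc M \su \mc R_\tau$, this yields $\ol{L^1 \cap \mc M} \su \mc R_\tau$. So suppose $x_n \in \mc R_\tau$ and $\|x_n - x\|_{L^1 + \mc M} \to 0$ with $x \in L^1 + \mc M$. As $\mu_{\cdot}(x - x_n)$ is non-increasing, $\mu_1(x - x_n) \le \int_0^1 \mu_t(x - x_n)\,dt = \|x - x_n\|_{L^1 + \mc M} \to 0$. Fixing $\lb > 0$, choose $n$ with $\mu_1(x - x_n) < \lb/2$ and then, using $x_n \in \mc R_\tau$, some $t_0$ with $\mu_{t_0}(x_n) < \lb/2$. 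Subadditivity gives $\mu_{t_0 + 1}(x) \le \mu_1(x - x_n) + \mu_{t_0}(x_n) < \lb$, so $\tau(\chi_{(\lb, \ii)}(|x|)) \le t_0 + 1 < \ii$. Since $\lb > 0$ is arbitrary, $x \in \mc R_\tau$.

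For the reverse inclusion I would approximate a given $x \in \mc R_\tau$, with polar decomposition $x = u|x|$, by the two-sided truncations $x_{\dt, N} = u\,|x|\,\chi_{(\dt, N]}(|x|)$, $0 < \dt < N < \ii$. Each $x_{\dt, N}$ lies in $L^1 \cap \mc M$: it is bounded by $N$, while its support projection $\chi_{(\dt, N]}(|x|)$ has trace at most $\tau(\chi_{(\dt, \ii)}(|x|)) < \ii$ (here the hypothesis $x \in \mc R_\tau$ is essential), so $\tau(|x_{\dt, N}|) \le N\,\tau(\chi_{(\dt, \ii)}(|x|)) < \ii$. From $\mathbf 1 - \chi_{(\dt, N]}(|x|) = \chi_{[0, \dt]}(|x|) + \chi_{(N, \ii)}(|x|)$ I get $x - x_{\dt, N} = A + B$ with $A = u\,|x|\,\chi_{[0,\dt]}(|x|)$ and $B = u\,|x|\,\chi_{(N,\ii)}(|x|)$, where $\|A\|_{\ii} \le \dt$ and, by the trace formula, $\|B\|_1 = \tau(|x|\chi_{(N,\ii)}(|x|)) = \int_0^{\,\tau(\chi_{(N,\ii)}(|x|))} \mu_t(x)\,dt$. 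Hence $\|x - x_{\dt, N}\|_{L^1 + \mc M} \le \dt + \int_0^{\,\tau(\chi_{(N,\ii)}(|x|))} \mu_t(x)\,dt$. Because $x$ is $\tau$-measurable, $\tau(\chi_{(N,\ii)}(|x|)) \to 0$ as $N \to \ii$, and because $\mu_{\cdot}(x)$ is integrable on $(0,1)$ (with integral $\|x\|_{L^1 + \mc M}$), absolute continuity of the integral makes the integral term tend to $0$; so given $\eta > 0$ one takes $N$ large, then $\dt < \eta/2$, to obtain $\|x - x_{\dt, N}\|_{L^1 + \mc M} < \eta$. Thus $x \in \ol{L^1 \cap \mc M}$.

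I expect the main obstacle to be the bookkeeping in this last step rather than any single hard idea: one must confirm that the truncation really belongs to $L^1 \cap \mc M$ --- which is precisely where finiteness of $\tau(\chi_{(\dt, \ii)}(|x|))$, i.e.\ membership in $\mc R_\tau$, is indispensable --- and that the $L^1$-error $\int_0^{\tau(\chi_{(N,\ii)}(|x|))} \mu_t(x)\,dt$ can be driven to $0$. Controlling this error simultaneously uses membership in $L^1 + \mc M$ (so that $\mu_{\cdot}(x)$ is integrable near $0$) and the vanishing of the distribution at infinity, and it is the point at which the trace formula for $\mu_t$ must be applied with care.
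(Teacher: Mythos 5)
Your proof is correct, but there is nothing internal to compare it with: the paper does not prove this proposition at all, it simply quotes it from \cite[Proposition 2.7]{ddp}. What you have written is a self-contained substitute for that citation, and it is essentially the standard argument from that reference: (i) $\mc R_\tau$ is closed in $L^1+\mc M$, obtained from $\mu_1(x-x_n)\le\int_0^1\mu_t(x-x_n)\,dt=\|x-x_n\|_{L^1+\mc M}$ together with the subadditivity $\mu_{t_0+1}(x)\le\mu_1(x-x_n)+\mu_{t_0}(x_n)$, which combined with $L^1\cap\mc M\su\mc R_\tau$ gives $\ol{L^1\cap\mc M}\su\mc R_\tau$; and (ii) every $x\in\mc R_\tau$ is approximated in $\|\cdot\|_{L^1+\mc M}$ by the two-sided spectral truncations $x_{\dt,N}=u|x|\chi_{(\dt,N]}(|x|)$, which belong to $L^1\cap\mc M$ precisely because $\tau(\chi_{(\dt,\ii)}(|x|))<\ii$, with error at most $\dt+\int_0^{\lb_N}\mu_t(x)\,dt$, where $\lb_N=\tau(\chi_{(N,\ii)}(|x|))\to 0$, so absolute continuity of the integral finishes the argument. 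You also read the defining condition of $L^0_\tau$ and $\mc R_\tau$ correctly as $\mu_t(x)\to 0$ when $t\to\ii$; the paper's ``$t\to 0$'' is a typo, as its own restatement via $\tau(\chi_{(\lb,\ii)}(|x|))<\ii$ for all $\lb>0$ shows.

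One caveat on a tool you invoke in your preamble: the trace formula $\tau(g(|x|))=\int_0^\ii g(\mu_t(x))\,dt$ is \emph{not} valid for arbitrary Borel $g\ge 0$ with $g(0)=0$. For instance, if $|x|=p+2q$ with projections $p,q$ satisfying $p+q=\mathbf 1$ and $\tau(p)=\tau(q)=\ii$, then $\mu_t(x)\equiv 2$, so for $g=\chi_{\{1\}}$ the right-hand side is $0$ while the left-hand side is $\tau(p)=\ii$; Fack and Kosaki prove the formula for \emph{continuous increasing} $g$ with $g(0)=0$. This does not damage your proof, because the single instance you actually use, $\|\,|x|\chi_{(N,\ii)}(|x|)\,\|_1=\int_0^{\lb_N}\mu_t(x)\,dt$, can be verified directly: one checks that $\mu_t\bigl(|x|\chi_{(N,\ii)}(|x|)\bigr)=\mu_t(x)\chi_{[0,\lb_N)}(t)$ and then applies $\tau(y)=\int_0^\ii\mu_t(y)\,dt$ for $y\ge 0$ (alternatively, for $x\in L^0_\tau$ the spectral distribution of $|x|$ is $\sigma$-finite on $(0,\ii)$, so equimeasurability of $|x|$ and $\mu(x)$ does hold). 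You should justify the step in that form rather than by appeal to the false general statement.
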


Note that definitions of $L^0_{\tau}$ and $\| \cdot \|_{L^1+L^\ii}$ yield that if
$$
x\in \mc R_\tau, \ y\in L^1+L^\ii, \text{\ and \ } \mu_t(y) \leq \mu_t(x),
$$
then $y\in \mc R_\tau$ and $\| y\|_{L^1+\mc M} \leq \| x\|_{L^1+\mc M}$. Therefore 
$(\mc R_\tau, \|\cdot\|_{L^1+\mc M})$ is a noncommutative symmetric space \cite[\S 2]{ddp}.

\section{Individual Ergodic Theorems in $\mc R_\tau$}

A linear operator $T: L^1+\mc M\to  L^1+\mc M$ is called a {\it Dunford-Schwartz operator} (writing $T\in DS)$ if
$$
\| T(x)\|_1\leq \| x\|_1 \ \ \forall \ x\in L^1 \text{ \ and \ } \| T(x)\|_{\ii}\leq \| x\|_\ii \ \ \forall \ x\in \mc M.
$$
If a Dunford-Schwartz operator $T$ is positive, that is, $T(x)\ge 0$ whenever $x\ge 0$, we shall write $T\in DS^+$.

If $T\in DS$, then
\begin{equation}\label{e11}
\| T\|_{ L^1+\mc M\to  L^1+\mc M} \leq 1
\end{equation}
(see, for example, \cite[\S 4]{ddp}) and, in addition, $Tx \prec\prec x$ for all $x \in L^1+\mc M$ \cite[Theorem 4.7]{ddp}. Hence
\begin{equation}\label{e12}
T(\mc R_\tau) \subset \mc R_\tau.
\end{equation}
Given $T\in DS$ and $x\in L^1+L^{\ii}$, denote
\begin{equation}\label{e13}
A_n(x)=A_n(T,x)=\frac 1n\sum_{k=0}^{n-1} T^k(x), \ \ n=1,2,\dots,
\end{equation}
the corresponding ergodic averages of the operator $x$.

Note that, by (\ref{e11}) and (\ref{e12}), for any $T\in DS$
its restriction on $\mc R_\tau$ is a linear contraction (also denoted by $T$).

\begin{df}
A sequence  $\{ x_n\}\su L^0$ is said to converge to $x\in L^0$ {\it bilaterally almost
uniformly (b.a.u.)} if for every $\ep>0$ there exists such a projection $e\in \mc P(\mc M)$ that $\tau(e^{\perp})\leq \ep$ and
$\| e(x-x_n)e\|_{\ii}\to 0$.
\end{df}

\vskip 5pt
The following groundbreaking result was established in \cite{ye} as a corollary of a
noncommutative maximal ergodic inequality \cite[Theorem 1]{ye}
(for the assumption $T\in DS^+$, see \cite[Remark 1.2]{cl} and \cite[Lemma 1.1]{jx}).

\begin{teo}\label{t12}
Let $T\in DS^+$ and $x\in L^1$. Then the averages (\ref{e13}) converge b.a.u. to some $\widehat x\in L^1$.
\end{teo}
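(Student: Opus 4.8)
The plan is to recover Theorem~\ref{t12} from the noncommutative maximal ergodic inequality \cite[Theorem 1]{ye} via a Banach-principle argument. That inequality provides, for each $x\in L^1$ with $x\ge 0$ and each $\lb>0$, a projection $e\in\mc P(\mc M)$ with $\tau(e^\perp)\le \lb^{-1}\|x\|_1$ and $\sup_n\|eA_n(x)e\|_\ii\le \lb$; its $p=2$ counterpart, controlling $\sup_n\|eA_n(x)e\|_\ii$ by a multiple of $\|x\|_2$ on a large projection, is the $L^2$ case of the $L^p$ maximal ergodic inequalities \cite{jx}. Decomposing a general $x\in L^1$ into real and imaginary, and then positive and negative, parts, it suffices to treat $x\ge 0$; and since the b.a.u. limit is unique, it is enough to show that $\{A_n(x)\}$ is \emph{b.a.u. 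Cauchy}: for every $\ep>0$ there is a projection $e$ with $\tau(e^\perp)\le\ep$ and $\|e(A_n(x)-A_m(x))e\|_\ii\to 0$ as $n,m\to\ii$.

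First I would settle the question on $L^2$. Since $Tx\prec\prec x$ for all $x$ and $L^2$ is a fully symmetric space, $T$ restricts to a contraction on the Hilbert space $L^2$, so the mean ergodic theorem gives the orthogonal splitting $L^2=\ker(I-T)\oplus\ol{(I-T)L^2}$. On a fixed point $z=Tz$ the averages are constant, and on a coboundary $x=(I-T)y$ with $y\in L^2\cap\mc M$ one has $A_n(x)=\tfrac1n(y-T^ny)$, so $\|A_n(x)\|_\ii\le\tfrac2n\|y\|_\ii\to 0$; in both cases $\{A_n(x)\}$ converges b.a.u. Such coboundaries are dense in $\ol{(I-T)L^2}$, and the $L^2$ maximal inequality lets me pass b.a.u. convergence to the closure by the Banach-principle mechanism detailed in the next paragraph. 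Hence $\{A_n(x)\}$ converges b.a.u. for every $x\in L^2$, in particular for every $x\in L^1\cap L^2$.

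Next I would run the Banach principle in $L^1$. Fix $x\in L^1$, $x\ge 0$, and $\ep>0$, and choose self-adjoint $y_k\in L^1\cap L^2$ with $\|x-y_k\|_1<\dt_k$, where $\dt_k\downarrow 0$ will be fixed later; this is possible since $L^1\cap L^2$ is dense in $L^1$. The maximal inequality applied to $|x-y_k|$ yields a projection $f_k$ with $\tau(f_k^\perp)\le \ep2^{-k}$ and $\sup_n\|f_kA_n(|x-y_k|)f_k\|_\ii\le 2^k\dt_k/\ep$, and because $x-y_k$ is self-adjoint, $-A_n(|x-y_k|)\le A_n(x-y_k)\le A_n(|x-y_k|)$, so $\sup_n\|f_kA_n(x-y_k)f_k\|_\ii\le 2^k\dt_k/\ep$ as well; meanwhile the $L^2$ result of the previous paragraph provides $g_k$ with $\tau(g_k^\perp)\le\ep2^{-k}$ and $\|g_k(A_n(y_k)-A_m(y_k))g_k\|_\ii\to 0$. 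Putting $e=\bigwedge_k(f_k\wedge g_k)$ gives $\tau(e^\perp)\le\sum_k 2\ep2^{-k}=4\ep$, and since $e\le f_k\wedge g_k$, the triangle inequality bounds $\limsup_{n,m}\|e(A_n(x)-A_m(x))e\|_\ii$ by $2^{k+1}\dt_k/\ep$ for each $k$; choosing, say, $\dt_k=4^{-k}\ep$ forces this $\limsup$ to be $0$. Thus $\{A_n(x)\}$ is b.a.u. Cauchy, and since $L^0$ is complete in measure, it converges b.a.u. to some $\widehat x\in L^0$.

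Finally, b.a.u. convergence implies convergence in measure, and $\|A_n(x)\|_1\le\|x\|_1$ because $T$ is an $L^1$-contraction; the Fatou property of $L^1$ (lower semicontinuity of $\|\cdot\|_1$ under convergence in measure) then gives $\|\widehat x\|_1\le\|x\|_1<\ii$, so $\widehat x\in L^1$. The step I expect to be the main obstacle is the transfer of convergence from the dense sets to the whole space: the noncommutative maximal object is not a single operator but the entire family $\{eA_n(x)e\}$, so the diagonal construction must produce one projection that simultaneously tames the approximation tail (through the maximal inequality) and the Cauchy behaviour on the approximant, and the bookkeeping that makes the two $\ep2^{-k}$ trace budgets cooperate with the decay of $\dt_k$ is precisely where the argument must be handled with care.
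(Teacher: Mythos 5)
Your proposal cannot be compared line for line with the paper's argument, because the paper offers none: Theorem \ref{t12} is imported from Yeadon \cite{ye}, with \cite[Theorem 1]{ye} cited for the underlying maximal ergodic inequality and \cite[Remark 1.2]{cl}, \cite[Lemma 1.1]{jx} for the reduction to $T\in DS^+$. Judged on its own merits, your reconstruction is correct, and it follows the modern (Junge--Xu) route to Yeadon's theorem rather than Yeadon's original one: you settle a dense set via the Hilbert-space mean ergodic theorem in $L^2$ --- fixed points, plus coboundaries $(I-T)y$ with $y\in L^2\cap\mc M$, where $\|A_n((I-T)y)\|_\ii\le \frac{2}{n}\|y\|_\ii$ --- extend to all of $L^2$ by the $L^2$ maximal inequality of \cite{jx}, and then pass from the dense set $L^1\cap L^2$ to $L^1$ by the weak-(1,1) Banach-principle argument based on \cite[Theorem 1]{ye}. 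Yeadon's 1977 argument, by contrast, gets by with the weak-(1,1) inequality alone: a subspace built from invariant elements and coboundaries $y-Ty$, $y\in L^1\cap\mc M$, is shown to be dense in $L^1$ directly (no $L^2$ maximal inequality existed in 1977), so your use of \cite{jx} is a much heavier tool than the theorem requires --- though not a circular one, since \cite{jx} derive their maximal inequalities from Yeadon's \emph{maximal} inequality by interpolation, not from his convergence theorem. Your $L^1$ bookkeeping is sound: applying the maximal inequality to $|x-y_k|$ and using $-A_n(|x-y_k|)\le A_n(x-y_k)\le A_n(|x-y_k|)$ (this is exactly where positivity of $T$ enters) with $\dt_k=4^{-k}\ep$ and $\lb_k=2^k\dt_k/\ep$ makes the trace budget $\ep 2^{-k}$ and the error bound $2\lb_k\to 0$ cooperate; notably, this diagonal construction is structurally identical to the paper's own proof of Theorem \ref{t13}, where the role of your maximal-inequality estimate is played instead by the trivial bound $\|A_n(y_k)\|_\ii\le\|y_k\|_\ii\le\frac1k$. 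Two points deserve more care than you give them, though neither is fatal: the passage from b.a.u.\ Cauchy to b.a.u.\ convergent is not merely ``completeness of $L^0$ in measure'' --- one must patch the uniform limits of $eA_n(x)e$ against the measure limit --- and is precisely \cite[Theorem 2.3]{cls}, which the paper invokes at the same juncture; and in your $L^2$ Banach-principle step the errors $x-x_k$ are neither positive nor self-adjoint, so the maximal inequality of \cite{jx}, stated for positive elements, must be applied after the standard four-fold splitting, a reduction your opening sentence performs only for the ambient $x\in L^1$.
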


 The next result is an extension of Theorem \ref{t12} to $\mc R_\tau$.
\begin{teo}\label{t13}
Let $T\in DS^+$ and $x\in \mc R_\tau$. Then the averages (\ref{e13}) converge b.a.u. to some $\widehat x\in  L^1+\mc M$.
\end{teo}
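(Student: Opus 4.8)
The plan is to bootstrap the $L^1$ result of Theorem \ref{t12} to $\mc R_\tau$ by writing each element as an $L^1$ part plus a uniformly small bounded remainder, and then to build one projection that controls every $L^1$ piece at once. First, I would fix $x\in\mc R_\tau$ and, for each integer $k\ge 1$, set $e_k=\chi_{(1/k,\,\ii)}(|x|)$. The membership $x\in\mc R_\tau$ means exactly that $\mu_t(x)\to 0$ as $t\to 0$, i.e. $\tau(e_k)<\ii$ for every $k$. Splitting
$$
x=y_k+z_k,\qquad y_k=xe_k,\quad z_k=xe_k^{\perp},
$$
one has $\|z_k\|_{\ii}\le 1/k$, while $|y_k|=|x|e_k$ has $\mu_t(y_k)=\mu_t(x)$ for $t<\tau(e_k)$ and $0$ for $t\ge\tau(e_k)$, so that $\|y_k\|_1=\int_0^{\tau(e_k)}\mu_t(x)\,dt<\ii$ (finite since $\int_0^1\mu_t(x)\,dt<\ii$ and $\mu_t(x)$ is bounded on $[1,\tau(e_k)]$); hence $y_k\in L^1$. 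Thus Theorem \ref{t12} applies to each $y_k$: the averages $A_n(y_k)$ converge b.a.u. to some $\widehat y_k\in L^1$.

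Next I would fix $\ep>0$ and, for each $k$, choose $g_k\in\mc P(\mc M)$ with $\tau(g_k^{\perp})\le\ep\,2^{-k-1}$ and $\|g_k(A_n(y_k)-\widehat y_k)g_k\|_{\ii}\to 0$. Setting $e=\bigwedge_{k\ge 1}g_k$ gives a single projection with $\tau(e^{\perp})\le\sum_k\tau(g_k^{\perp})\le\ep/2$; since $e\le g_k$, the compression estimate $\|eae\|_{\ii}\le\|g_kag_k\|_{\ii}$ yields $\|e(A_n(y_k)-\widehat y_k)e\|_{\ii}\to 0$ for every fixed $k$. Because $T$ is an $\mc M$-contraction, $\|A_n(z_k)\|_{\ii}\le\|z_k\|_{\ii}\le 1/k$, so from $A_n(x)=A_n(y_k)+A_n(z_k)$ I obtain, for this one $e$ and every $k$,
$$
\limsup_{n,m\to\ii}\|e(A_n(x)-A_m(x))e\|_{\ii}\le\frac{2}{k}.
$$
The left-hand side is independent of $k$, hence equals $0$, so $\{eA_n(x)e\}_n$ is $\|\cdot\|_{\ii}$-Cauchy in $e\mc Me$. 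I expect this step — forcing the bounded remainders to vanish through a projection that works simultaneously for all $k$ — to be the main obstacle, as it is where the weakness of the $L^1+\mc M$ norm is circumvented and where the delicate bookkeeping lives.

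Finally I would upgrade this b.a.u.-Cauchy property to honest b.a.u. convergence. Completeness of $L^0$ in the measure topology provides a single $\widehat x\in L^0$ with $A_n(x)\to\widehat x$ in measure; for each $\ep$ the associated $e$ makes $eA_n(x)e$ converge uniformly, and the uniform and measure limits must coincide, so $\|e(A_n(x)-\widehat x)e\|_{\ii}\to 0$, which is b.a.u. convergence to $\widehat x$. To place $\widehat x$ in $L^1+\mc M$, note that $Tx\prec\prec x$ together with transitivity gives $T^kx\prec\prec x$, whence $A_n(x)\prec\prec x$ by convexity of $s\mapsto\int_0^s\mu_t(\cdot)\,dt$; passing to the measure limit and using that $y\mapsto\int_0^s\mu_t(y)\,dt$ is lower semicontinuous for convergence in measure gives $\widehat x\prec\prec x$, so $\|\widehat x\|_{L^1+\mc M}=\int_0^1\mu_t(\widehat x)\,dt\le\int_0^1\mu_t(x)\,dt<\ii$ and $\widehat x\in L^1+\mc M$, as required.
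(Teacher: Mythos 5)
Your proposal is correct and follows essentially the same route as the paper's own proof: truncate $|x|$ at height $1/k$ to get an $L^1$ piece plus a remainder of uniform norm at most $1/k$, apply Theorem \ref{t12} to the $L^1$ pieces, and intersect the resulting projections (with complements of trace at most $\ep 2^{-k}$) to make $\{A_n(x)\}$ bilaterally almost uniformly Cauchy. The only step you assert rather than prove is the passage from b.a.u.-Cauchy to a limit in measure --- smallness of $\|e(A_n-A_m)e\|_\ii$ with $\tau(e^\perp)\le\ep$ controls the measure topology only after a short singular-value argument handling the cross terms $e^\perp(A_n-A_m)e$, etc. --- which is precisely what the paper's citation of \cite[Theorem 2.3]{cls} (completeness of $L^0$ with respect to b.a.u. convergence) covers; your alternative ending ($A_n(x)\prec\prec x$ plus lower semicontinuity of $y\mapsto\int_0^s\mu_t(y)\,dt$ under convergence in measure, in place of the paper's appeal to the Fatou property of $L^1+\mc M$ and closedness of its unit ball in measure) is also valid.
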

\begin{proof}
Without loss of generality assume that $x\geq 0$.  Let $\{ e_\lb\}_{\lb\ge 0}$ be
the spectral family of $x$. Given $k \in \mathbb N$, denote $x_k = \int_{1/k}^\ii \lb de_\lb$
and $y_k = \int_0^{1/k} \lb de_\lb$. Then $0 \leq y_k \leq {\frac1k} \cdot \mathbf 1$, $x_k \in L^1$,
and $x = x_k + y_k$ for all $k$.

Fix $\ep > 0$. By Theorem \ref{t12}, $A_n(x_k) \to \widehat x_k \in L^1$ b.a.u. for each $k$.
Therefore there exists $e_k\in \mc P(\mc M)$ such that $\tau(e_k^{\perp})\leq \frac \ep{2^k}$ and
$\| e_k(A_n(x_k)-\widehat x_k)e_k\|_\ii\to 0$ as $n \rightarrow \infty$.

Then it follows that
$$
\| e_k(A_n(x_k)-A_m(x_k))e_k\|_{\ii} < \frac1k \text{ \ \ for all \ }  m, n \geq N(k).
$$
Since $\|y_k\|_{\ii} \leq \frac1k$, we have
$$
\| e_k(A_n(x)-A_m(x))e_k\|_{\ii} \leq
$$
$$
\leq \| e_k(A_n(x_k)-A_m(x_k))e_k\|_{\ii} + \| e_k(A_n(y_k)-A_m(y_k))e_k\|_{\ii} <
$$
$$
 <\frac1k + \| e_kA_n(y_k)e_k\|_{\ii} + \| e_kA_m(y_k)e_k\|_{\ii}  \leq {\frac3k}.
$$
for each $k$ and all $m, n \geq N(k)$.

Let $e = \bigwedge \limits_{k\geq 1} e_k$. Then $\tau(e^\perp) \leq \sum_{k=1}^{\infty} \tau(e_k^\perp) \leq \ep$ and
$$
\| e(A_n(x)-A_m(x))e\|_{\ii}  < {\frac3k}
$$
for all $m, n \geq N(k)$. This means that $\{A_n(x)\}$ is Cauchy with respect to b.a.u. convergence. Then, by \cite[Theorem 2.3] {cls}, we conclude that the sequence $\{A_n(x)\}$ converges b.a.u. to some $\widehat x\in L^0$.

Since  $L^1+\mc M$ satisfies Fatou property \cite[\S 4]{dp}, its unit ball is closed in the measure topology \cite[Theorem 4.1]{ddst},  and (\ref{e11}) implies that $\widehat x \in L^1+\mc M$.
\end{proof}

Let $\Bbb C_1=\{z\in \Bbb C: |z|=1\}$ be the unit circle in $\mathbb C$. A function $P : \mathbb Z \to \mathbb C$ is said to be a {\it trigonometric polynomial} if
$P(k)=\sum_{j=1}^{s} z_j\lb_j^k$, $k\in \mathbb Z$, for some $s\in \mathbb N$, $\{ z_j \}_1^s \subset \mathbb C$, and $\{ \lb_j \}_1^s \subset \mathbb C_1$.
A sequence $\{ \beta_k \}_{k=0}^{\ii} \subset \Bbb C$ is called a {\it bounded Besicovitch sequence} if

(i) $| \beta_k | \leq C < \ii$ for all $k$;

(ii) for every $\ep >0$ there exists a trigonometric polynomial $P$ such that
$$
\limsup_n \frac 1{n+1} \sum_{k=0}^n | \beta_k - P(k) | < \ep.
$$

The following theorem was established in \cite{cls}.

\begin{teo}\label{t14}
Assume that $\mc M$ has a separable predual. Let $T\in DS^+$ and let $\{ \bt_k\}$ be a Besicovitch sequence.
Then for every $x\in L^1(\mc M)$ the averages
\begin{equation}\label{e14}
\frac 1n\sum_{k=0}^{n-1}\bt_kT^k(x)
\end{equation}
converge b.a.u. to some $\widehat x\in L^1$.
\end{teo}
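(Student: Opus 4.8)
The plan is to run the noncommutative Banach principle for b.a.u.\ convergence: once the weighted averages $M_n(x)=\frac1n\sum_{k=0}^{n-1}\bt_kT^k(x)$ are shown to be uniformly equicontinuous in measure at $0$ (a maximal inequality), the set of $x\in L^1$ for which $\{M_n(x)\}$ converges b.a.u.\ is closed, so it suffices to produce this maximal bound, establish convergence on a dense subset, and then locate the limit. Throughout I would reduce to $x\geq 0$ by splitting $x$ into its four nonnegative parts, and reduce to nonnegative weights by writing $\bt_k$ (and, later, $\bt_k-P(k)$) as $\bt_k=\bt_k^{(1)}-\bt_k^{(2)}+i\bt_k^{(3)}-i\bt_k^{(4)}$ with $0\le\bt_k^{(j)}\le C$.

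For the maximal inequality, positivity is essential: for $x\ge 0$ and $0\le\gm_k\le C$ one has $0\le\frac1n\sum_{k=0}^{n-1}\gm_kT^k(x)\le C\,A_n(x)$ because $T\in DS^+$, so the weighted averages are dominated operator-wise by the ordinary Cesàro averages. Yeadon's maximal ergodic inequality \cite{ye} (in $L^1$) and the Junge--Xu maximal inequality \cite{jx} (in $L^2$) then control $\sup_n\|eA_n(x)e\|_\ii$ on a projection $e$ with $\tau(e^{\perp})$ small, and the domination transfers this control to $M_n$. This is exactly the point at which the hypothesis $T\in DS^+$ cannot be dropped.

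For convergence on a dense set I would treat trigonometric weights first. For $\lb\in\mathbb C_1$ the operator $S=\lb T$ is a contraction on $L^1$, on $\mc M$, and, by interpolation, on $L^2(\mc M)$, so on this Hilbert space the mean ergodic theorem gives $L^2(\mc M)=\mathrm{Fix}(S)\oplus\ol{(I-S)L^2(\mc M)}$. On $\mathrm{Fix}(S)$ the averages $\frac1n\sum\lb^kT^k$ are constant, while on $(I-S)z$ with $z\in L^1\cap\mc M$ they equal $\frac1n(z-S^nz)$, which tends to $0$ in $\|\cdot\|_\ii$; hence the modulated averages converge b.a.u.\ on a subspace whose $L^1$-closure, via the maximal inequality, is all of $L^1$. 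Linearity then yields b.a.u.\ convergence of $\frac1n\sum_{k} P(k)T^k(x)$ for every trigonometric polynomial $P$ and every $x$.

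Finally, given $\bt$ and $\ep>0$, choose a trigonometric polynomial $P$ with $\limsup_n\frac1{n+1}\sum_{k=0}^n|\bt_k-P(k)|<\ep$ and split $M_n(x)$ into the $P$-weighted part, handled above, and the residual $\frac1n\sum(\bt_k-P(k))T^k(x)$. The residual is the crux. For $x\in L^1\cap L^2$ I would estimate it by an operator Cauchy--Schwarz/Hölder splitting of $\frac1n\sum|\bt_k-P(k)|\,T^k(x)$ against $\bigl(\frac1n\sum|\bt_k-P(k)|^2\bigr)^{1/2}$ and the $L^2$ ergodic maximal function, so that the small factor forces the residual maximal function to be $O(\ep^{1/2})$ in the $L^2$ sense, hence b.a.u.\ small on a large projection; density of $L^1\cap L^2$ and the $L^1$ maximal inequality then extend this to all $x\in L^1$. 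Combining the two parts shows that $\{M_n(x)\}$ is b.a.u.-Cauchy, whence it converges b.a.u.\ to some $\widehat x$ by the completeness of b.a.u.\ convergence \cite[Theorem 2.3]{cls}; since $\|M_n(x)\|_1\le\|x\|_1$, closedness of the $L^1$-ball in measure gives $\widehat x\in L^1$. The main obstacle is precisely this residual estimate: unlike in the scalar case, the Cauchy--Schwarz step and the passage from an $L^2$ maximal bound to b.a.u.\ smallness with a \emph{single} projection valid for all $n$ must be executed in the noncommutative $L^p(\ell^\infty)$ framework of \cite{jx}; this, together with transferring the $L^2$ mean ergodic decomposition to $L^1$-density, is where the separable-predual hypothesis and most of the technical effort are spent.
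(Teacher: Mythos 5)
First, a point of comparison: the paper does not prove Theorem \ref{t14} at all --- it imports it from \cite{cls} --- so your attempt can only be judged against the argument given there, whose skeleton (Yeadon's $L^1$ maximal inequality transferred to bounded weights by positivity, a noncommutative Banach principle for b.a.u.\ convergence, trigonometric weights on a dense set, then Besicovitch approximation) your proposal correctly reproduces. The genuine gap is at the dense-set step, which is the heart of the matter. You assert that $\mathrm{Fix}(\lb T)+(I-\lb T)(L^1\cap\mc M)$ is a subspace ``whose $L^1$-closure, via the maximal inequality, is all of $L^1$''. That is unjustified and, as stated, not even the right kind of object: $\mathrm{Fix}(\lb T)$ is a subspace of $L^2(\mc M)$ that need not be contained in $L^1$, and what the mean ergodic theorem gives is $\|\cdot\|_2$-density of this sum in $L^2$, not $\|\cdot\|_1$-density of anything in $L^1$. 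To bridge that gap you must run the Banach principle in $L^2$ as well --- i.e., derive bilateral uniform equicontinuity in measure at $0$ on $L^2$ from the Junge--Xu maximal inequality, conclude b.a.u.\ convergence of the $\lb$-weighted averages for every $x\in L^2$, and only then use that $L^1\cap\mc M\su L^1\cap L^2$ is $\|\cdot\|_1$-dense in $L^1$ to feed the $L^1$ principle. You do cite Junge--Xu, but only for the maximal bound on the weighted averages (where Yeadon plus your own domination argument already suffices), not at the one place where an $L^2$ maximal inequality is actually indispensable. As written, the proof does not close.

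Second, the residual term is not ``the crux'', and your proposed treatment of it is both overcomplicated and technically flawed. The inequality implicit in your Cauchy--Schwarz step, $\frac1n\sum_k|\gm_k|\,T^k(x)\le\bigl(\frac1n\sum_k|\gm_k|^2\bigr)^{1/2}\bigl(\frac1n\sum_k(T^kx)^2\bigr)^{1/2}$, is not a valid operator inequality; it survives only at the level of quadratic forms, and even then relating the square function $\sum_k(T^kx)^2$ to an ergodic maximal function requires a Kadison--Schwarz inequality $T(a)^2\le T(a^2)$ that you never invoke. All of this machinery is unnecessary: take the dense set inside $L^1\cap\mc M$. Since $T$ contracts $\|\cdot\|_\ii$, for $x\in L^1\cap\mc M$ one has $\bigl\|\frac1n\sum_{k=0}^{n-1}(\bt_k-P(k))T^k(x)\bigr\|_\ii\le\|x\|_\ii\cdot\frac1n\sum_{k=0}^{n-1}|\bt_k-P(k)|$, whose limsup is at most $\ep\,\|x\|_\ii$ by the choice of $P$; the residual is then uniformly small, with no projection and no $L^p(\ell^\ii)$ framework at all. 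Finally, your argument never actually uses the separable-predual hypothesis; you only declare at the end that it is ``spent'' somewhere in the technical work. That is not by itself fatal (the hypothesis was indeed later shown to be removable), but a correct write-up must either exhibit the exact step where separability enters or acknowledge explicitly that a stronger statement than Theorem \ref{t14} is being claimed.
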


In view of Theorem \ref{t14} and since a sequence $\{ \bt_k\}$ in question is bounded, the proof of Theorem \ref{t13} can be
carried out for the averages (\ref{e14}), and we obtain the following.

\begin{teo}\label{t15}
Let $\mc M$, $T$, $\{ \bt_k\}$ be as in Theorem \ref{t14}. Then for every $x\in \mc R_\tau$ the averages (\ref{e14})
converge to some $\widehat x\in L^1+\mc M$.
\end{teo}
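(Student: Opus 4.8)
The plan is to carry out the proof of Theorem~\ref{t13} essentially verbatim, with the Ces\`aro averages $A_n(x)$ replaced by the weighted averages
$$
M_n(x) = \frac 1n\sum_{k=0}^{n-1}\bt_k T^k(x),
$$
and with Theorem~\ref{t14} invoked in place of Theorem~\ref{t12}. Since $M_n$ depends linearly on $x$ and b.a.u.\ convergence is compatible with finite linear combinations, and since $\mc R_\tau$ is a linear solid space, I would first reduce to the case $x\ge 0$ by splitting $x$ into its real and imaginary parts and each of these into positive and negative parts, all of which remain in $\mc R_\tau$.

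Assuming $x\ge 0$, I would take the spectral family $\{e_\lb\}_{\lb\ge 0}$ of $x$ and set $x_k=\int_{1/k}^\ii \lb\,de_\lb\in L^1$ and $y_k=\int_0^{1/k}\lb\,de_\lb$, so that $x=x_k+y_k$ and $0\le y_k\le\frac1k\cdot\mathbf 1$. Fixing $\ep>0$, Theorem~\ref{t14} applies to each $x_k\in L^1$ and produces, exactly as before, projections $e_k$ with $\tau(e_k^\perp)\le\ep/2^k$ for which $\|e_k(M_n(x_k)-M_m(x_k))e_k\|_\ii<1/k$ once $m,n\ge N(k)$. The only place where the weights enter is the estimate of the tail: because $T\in DS^+$ is an $\mc M$-contraction, $\|T^j(y_k)\|_\ii\le\|y_k\|_\ii\le 1/k$, and the bound $|\bt_j|\le C$ then gives
$$
\|M_n(y_k)\|_\ii\le\frac1n\sum_{j=0}^{n-1}|\bt_j|\,\|T^j(y_k)\|_\ii\le\frac Ck
$$
for every $n$, whence $\|e_kM_n(y_k)e_k\|_\ii\le C/k$.

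Combining these, I would obtain $\|e_k(M_n(x)-M_m(x))e_k\|_\ii\le (1+2C)/k$ for all $m,n\ge N(k)$, and then, with $e=\bigwedge_{k\ge 1}e_k$, the estimate $\|e(M_n(x)-M_m(x))e\|_\ii<(1+2C)/k$ together with $\tau(e^\perp)\le\ep$. This shows $\{M_n(x)\}$ is b.a.u.\ Cauchy, so \cite[Theorem 2.3]{cls} yields a b.a.u.\ limit $\widehat x\in L^0$. To place $\widehat x$ in $L^1+\mc M$ I would note that (\ref{e11}) gives $\|M_n(x)\|_{L^1+\mc M}\le C\|x\|_{L^1+\mc M}$ uniformly in $n$; since $L^1+\mc M$ has the Fatou property \cite[\S 4]{dp} its balls are closed in the measure topology \cite[Theorem 4.1]{ddst}, and b.a.u.\ convergence implies convergence in measure, so $\widehat x\in L^1+\mc M$.

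I do not expect a genuine obstacle here: the structure is identical to that of Theorem~\ref{t13}, and the single adjustment is to track the bound $C$ on the Besicovitch weights. This constant enters the tail estimate and the final norm bound, but, being fixed and independent of $k$, $m$, and $n$, it disturbs none of the limiting arguments.
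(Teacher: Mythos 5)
Your proposal is correct and matches the paper's argument exactly: the paper itself proves Theorem~\ref{t15} by observing that, since the Besicovitch sequence $\{\bt_k\}$ is bounded, the proof of Theorem~\ref{t13} carries over verbatim to the averages (\ref{e14}) with Theorem~\ref{t14} playing the role of Theorem~\ref{t12}. Your explicit tracking of the constant $C$ in the tail estimate and in the final $L^1+\mc M$ bound is precisely the adjustment the paper alludes to.
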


\section{Applications of Theorem \ref{t13}}
Let us present some examples of noncommutative symmetric spaces for which 
Dunford-Schwartz individual ergodic theorem is valid.

1. Let $\Phi$ be an Orlicz function, $L^\Phi=L^\Phi(\mc M,\tau)$ the corresponding noncommutative
Orlicz space, $\| \cdot\|_\Phi$ the Luxemburg norm in $L^\Phi$ (see \cite{cl2}). Since $(L^\Phi(0,\ii), \| \cdot\|_\Phi)$ has Fatou property, the space $(L^\Phi(\mc M,\tau), \| \cdot\|_\Phi)$ has it as well \cite[\S 5]{ddp}.
As shown in the proof of \cite[Proposition 2.1]{cl2}, $L^\Phi \subset \mc R_\tau$.
It follows then from Theorem \ref{t13} that we have

\begin{teo}\label{t21}
Let $T\in DS^+$ and $x\in L^\Phi$. Then the averages (\ref{e13}) converge b.a.u. to some $\widehat x\in L^\Phi$.
\end{teo}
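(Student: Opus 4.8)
The plan is to deduce this directly from Theorem \ref{t13}, upgrading the target space of the limit from $L^1+\mc M$ to $L^\Phi$ via the full symmetry and Fatou property of $L^\Phi$. Since $L^\Phi \subset \mc R_\tau$, every $x \in L^\Phi$ already lies in $\mc R_\tau$, so Theorem \ref{t13} immediately gives that the averages $A_n(x)$ converge b.a.u. to some $\widehat x \in L^1+\mc M$. Thus the b.a.u. convergence itself is free; the only thing remaining to establish is the sharper membership $\widehat x \in L^\Phi$.

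To control the averages inside $L^\Phi$, I would first record that $A_n(x) \prec\prec x$ for every $n$. Indeed, since $T \in DS$ we have $Tx \prec\prec x$ by \cite[Theorem 4.7]{ddp}, whence $T^k(x) \prec\prec x$ for all $k$ by transitivity of $\prec\prec$, and by subadditivity of the functional $s \mapsto \int_0^s \mu_t(\cdot)\,dt$ it follows that
$$
\int_0^s \mu_t(A_n(x))\,dt \leq \frac1n \sum_{k=0}^{n-1} \int_0^s \mu_t(T^k(x))\,dt \leq \int_0^s \mu_t(x)\,dt
$$
for all $s>0$, that is, $A_n(x) \prec\prec x$. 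Because $(L^\Phi(\mc M,\tau), \| \cdot\|_\Phi)$ is a fully symmetric space and $x \in L^\Phi$, this yields $A_n(x) \in L^\Phi$ together with the uniform bound $\| A_n(x)\|_\Phi \leq \| x\|_\Phi$ for every $n$.

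Finally I would pass to the limit exactly as at the end of the proof of Theorem \ref{t13}. Since b.a.u.\ convergence implies convergence in measure, $A_n(x) \to \widehat x$ in the measure topology. As $(L^\Phi, \| \cdot\|_\Phi)$ has the Fatou property, its closed ball of radius $\| x\|_\Phi$ is closed in the measure topology \cite[Theorem 4.1]{ddst}; combined with $\| A_n(x)\|_\Phi \leq \| x\|_\Phi$ this forces $\widehat x \in L^\Phi$. The main point, and the only step going beyond a direct citation of Theorem \ref{t13}, is this promotion of the limit from $L^1+\mc M$ to $L^\Phi$: it rests on the submajorization $A_n(x) \prec\prec x$ feeding a uniform $L^\Phi$-estimate and on the measure-closedness of balls guaranteed by the Fatou property, in complete parallel to the concluding argument used for $L^1+\mc M$ in Theorem \ref{t13}.
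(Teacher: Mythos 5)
Your proof is correct and takes essentially the same route as the paper: the paper likewise deduces the theorem from Theorem \ref{t13} via the inclusion $L^\Phi \subset \mc R_\tau$, invoking the Fatou property of $L^\Phi(\mc M,\tau)$ to place the limit in $L^\Phi$. You have simply made explicit the submajorization estimate $A_n(x)\prec\prec x$ and the measure-closedness of balls (via \cite[Theorem 4.1]{ddst}) that the paper leaves implicit, in parallel with the concluding step of the proof of Theorem \ref{t13}.
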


2. Let $(E(\mc M), \|\cdot\|_{E(\mc M)})$  be a noncommutative fully symmetric space  with order continuous norm. Then $\tau(\{|x| > \lambda\}) < \infty$  for all $x \in E(\mc M)$ and $\lambda > 0$, so  $E(\mc M) \subset \mc R_\tau$. Thus we have
\begin{teo}\label{t22}
Let $(E(\mc M), \|\cdot\|_{E(\mc M)})$  be noncommutative fully symmetric space  with order continuous norm. Let $T\in DS^+$ and $x\in E(\mc M)$. Then the averages (\ref{e13}) converge b.a.u. to some $\widehat x\in E(\mc M)$.
\end{teo}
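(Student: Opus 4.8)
The plan is to derive Theorem \ref{t22} from Theorem \ref{t13} in two steps: establish the inclusion $E(\mc M) \subset \mc R_\tau$, which makes Theorem \ref{t13} applicable and already gives b.a.u.\ convergence of the averages $A_n(x)$ to some $\widehat x \in L^1 + \mc M$; and then upgrade the membership of the limit from $L^1 + \mc M$ to $E(\mc M)$. The first step is what links the hypotheses to Theorem \ref{t13}, while the second is where the real work lies, since Theorem \ref{t13} alone only places $\widehat x$ in $L^1 + \mc M$.

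For the inclusion, I would fix $x \in E(\mc M)$ with $x \geq 0$ and argue by contradiction: if $\tau(\chi_{(\lb_0,\ii)}(x)) = \ii$ for some $\lb_0 > 0$, set $p = \chi_{(\lb_0,\ii)}(x)$, an infinite projection with $\lb_0 p \leq x$, so $\lb_0 p \in E(\mc M)$ by solidity. Using semifiniteness of $\tau$, I would decompose $p$ into countably many pairwise orthogonal subprojections of finite trace and let $p_n$ be the tails of this decomposition; then $p_n \downarrow 0$ while $\tau(p_n) = \ii$ for every $n$, whence $\mu_t(\lb_0 p_n) \equiv \lb_0$ and $\|\lb_0 p_n\|_{E(\mc M)}$ is a positive constant independent of $n$. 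This contradicts order continuity of the norm. Hence $\tau(\chi_{(\lb,\ii)}(x)) < \ii$ for all $\lb > 0$, and combined with the continuous embedding $E(\mc M) \subset L^1 + \mc M$ this yields $x \in \mc R_\tau$.

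For the upgrade, I would use submajorization. Since $T \in DS$ satisfies $Ty \prec\prec y$ on $L^1 + \mc M$, transitivity gives $T^k x \prec\prec x$ for all $k$, and subadditivity of the functionals $y \mapsto \int_0^s \mu_t(y)\,dt$ then gives $A_n(x) \prec\prec x$ for every $n$; in particular $A_n(x) \in E(\mc M)$ with norm at most $\|x\|_{E(\mc M)}$. By Theorem \ref{t13} the averages converge to $\widehat x$ b.a.u., hence in measure. The key point is then to pass the uniform submajorization $A_n(x) \prec\prec x$ to this limit: each functional $\int_0^s \mu_t(\cdot)\,dt$ is lower semicontinuous for the measure topology — this is the Fatou property of $L^1 + \mc M$ already invoked in the proof of Theorem \ref{t13} — so $\widehat x \prec\prec x$, and full symmetry of $E(\mc M)$ forces $\widehat x \in E(\mc M)$ (indeed $\|\widehat x\|_{E(\mc M)} \leq \|x\|_{E(\mc M)}$). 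I expect this last limit passage to be the main obstacle: the averages are uniformly controlled only in the submajorization order, the convergence provided by Theorem \ref{t13} is merely in measure, and since $E(\mc M)$ need not have the Fatou property the argument must be routed through the lower semicontinuity of the $L^1 + \mc M$-type norms rather than through $E(\mc M)$ itself.
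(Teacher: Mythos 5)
Your proposal follows the same skeleton as the paper: establish $E(\mc M)\su\mc R_\tau$ and then invoke Theorem \ref{t13}. For the inclusion, the paper simply asserts that order continuity forces $\tau(\chi_{(\lb,\ii)}(|x|))<\ii$ for all $\lb>0$; your contradiction argument is the right idea, but one step is not available in general: a projection of infinite trace need not be $\sigma$-finite, so $p=\chi_{(\lb_0,\ii)}(x)$ may admit no \emph{countable} decomposition into finite-trace subprojections summing to $p$ (think of the identity of $B(H)$ with $H$ non-separable). The repair is immediate: by semifiniteness take a maximal family $\{q_i\}_{i\in I}$ of pairwise orthogonal finite-trace subprojections of $p$, so that $\sum_{i\in I}q_i=p$, and consider the decreasing net $p_F=p-\sum_{i\in F}q_i$ over finite $F\su I$. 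Then $p_F\downarrow 0$, $\tau(p_F)=\ii$, hence $\mu_t(\lb_0 p_F)\equiv\lb_0$ and $\|\lb_0 p_F\|_{E(\mc M)}$ is a positive constant independent of $F$; since the paper's definition of order continuity is stated for nets, this yields the contradiction.

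Where you genuinely go beyond the paper is the membership $\widehat x\in E(\mc M)$. The paper's own argument for Theorem \ref{t22} stops after the inclusion and the appeal to Theorem \ref{t13}, which only places $\widehat x$ in $L^1+\mc M$; elsewhere (Theorems \ref{t24} and \ref{t25}) the authors obtain membership of the limit only under the additional Fatou property, via closedness of the unit ball in the measure topology. Your route is different and correct: $A_n(x)\prec\prec x$ follows from $Tx\prec\prec x$ \cite[Theorem 4.7]{ddp}, transitivity, and subadditivity of $y\mapsto\int_0^s\mu_t(y)\,dt$; b.a.u.\ convergence implies convergence in measure; and each functional $\int_0^s\mu_t(\cdot)\,dt$ is indeed lower semicontinuous in the measure topology (this follows from $\mu_{t+s}(y)\le\mu_t(y_n)+\mu_s(y-y_n)$, right-continuity of $t \mapsto \mu_t$, and Fatou's lemma, with no Fatou property of $E$ needed). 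Hence $\widehat x\prec\prec x$, and full symmetry of $E(\mc M)$ gives $\widehat x\in E(\mc M)$ with $\|\widehat x\|_{E(\mc M)}\le\|x\|_{E(\mc M)}$. This is a cleaner and more general mechanism than the paper's: it works for any fully symmetric $E(\mc M)\su\mc R_\tau$, fills a step the paper leaves unaddressed in Theorem \ref{t22}, and in fact shows that the Fatou hypotheses in Theorems \ref{t24} and \ref{t25} are not needed for the conclusion $\widehat x\in E(\mc M)$.
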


3. Let $\varphi$ be an increasing continuous concave function on $[0, \infty)$ with $\varphi(0) = 0$ and
$\varphi(\infty) = \ii$, and let $\Lambda_\varphi(\mc M)$ be the corresponding
noncommutative Lorentz space (see, for example, \cite{cs}). Since $\Lambda_\varphi(0,\ii)$
is a fully symmetric space  with order continuous norm and Fatou property \cite[Ch. II, \S 5]{kps},
$\Lambda_\phi(\mc M)$ also has order continuous norm and Fatou property \cite[Proposition 3.6 and \S 5]{ddp}. Then  we have
\begin{teo}\label{t23}
Let $T\in DS^+$ and $x\in \Lambda_\varphi(\mc M)$. Then the averages (\ref{e13}) converge b.a.u. to some $\widehat x\in \Lambda_\varphi(\mc M)$.
\end{teo}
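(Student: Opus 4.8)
The plan is to obtain Theorem~\ref{t23} as an application of Theorem~\ref{t13}, following verbatim the scheme of the two preceding applications; the only substantive point will be to confine the limit $\widehat x$ to the Lorentz space itself. I would begin by invoking the structural facts already recorded: $\Lambda_\varphi(0,\ii)$ is a fully symmetric function space possessing both order continuous norm and the Fatou property \cite[Ch.~II, \S 5]{kps}, and these two properties pass to the noncommutative space $(\Lambda_\varphi(\mc M), \|\cdot\|_{\Lambda_\varphi(\mc M)})$ \cite[Proposition 3.6 and \S 5]{ddp}. Order continuity of the norm then yields $\tau(\chi_{(\lb,\ii)}(|x|)) < \ii$ for every $x \in \Lambda_\varphi(\mc M)$ and $\lb > 0$; combined with the embedding $\Lambda_\varphi(\mc M) \su L^1 + \mc M$ valid for any fully symmetric space, the characterization of $\mc R_\tau$ gives the inclusion $\Lambda_\varphi(\mc M) \su \mc R_\tau$, exactly as in the derivation of Theorem~\ref{t22}.

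Granting this inclusion, for $x \in \Lambda_\varphi(\mc M)$ I would apply Theorem~\ref{t13} to produce a $\widehat x \in L^1 + \mc M$ with $A_n(x) \to \widehat x$ b.a.u. The remaining task is to show $\widehat x \in \Lambda_\varphi(\mc M)$, and here I would reproduce the closedness argument used at the end of the proof of Theorem~\ref{t13}. Since $T \in DS$ gives $Tx \prec\prec x$ and $\prec\prec$ is transitive, every iterate satisfies $T^k x \prec\prec x$; as the set of elements submajorized by $x$ is convex, the averages obey $A_n(x) \prec\prec x$, whence full symmetry of $\Lambda_\varphi(\mc M)$ furnishes the uniform norm bound $\|A_n(x)\|_{\Lambda_\varphi(\mc M)} \leq \|x\|_{\Lambda_\varphi(\mc M)}$.

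Finally, b.a.u.\ convergence implies convergence in the measure topology, so $A_n(x) \to \widehat x$ in measure with the $A_n(x)$ lying in a fixed ball of $\Lambda_\varphi(\mc M)$. By the Fatou property this ball is closed in the measure topology \cite[Theorem 4.1]{ddst}, forcing $\widehat x \in \Lambda_\varphi(\mc M)$ and completing the argument. I expect this last step to be the only real obstacle: one must guarantee that a measure-convergent sequence bounded in the Lorentz norm keeps its limit inside the space, a property that is exactly the Fatou condition and genuinely fails without it. Everything preceding it is formal, and indeed the entire statement may alternatively be read off at once from Theorem~\ref{t22}, since $\Lambda_\varphi(\mc M)$ is a noncommutative fully symmetric space with order continuous norm.
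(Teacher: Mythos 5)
Your proposal is correct and follows essentially the same route as the paper: order continuity of the norm gives $\Lambda_\varphi(\mc M)\su\mc R_\tau$ (exactly as in the derivation of Theorem~\ref{t22}), Theorem~\ref{t13} then yields b.a.u.\ convergence, and the Fatou property confines the limit to $\Lambda_\varphi(\mc M)$. The only difference is that you spell out the steps the paper leaves implicit (the bound $A_n(x)\prec\prec x$, hence $\|A_n(x)\|_{\Lambda_\varphi(\mc M)}\leq\|x\|_{\Lambda_\varphi(\mc M)}$, and closedness of balls in the measure topology), which is a faithful elaboration rather than a different argument.
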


4.  Let $E=(E(0,\ii), \|\cdot\|_{E(0,\ii)})$  be a fully symmetric function space, and let 
$E$ be $q$-concave for some $1 \leq q < \infty$.  Then $E$,
hence $(E(\mc M), \|\cdot\|_{E(\mc M)})$, has order continuous norm.  
Therefore we have the following.

\begin{teo}\label{t24}
Let $E=(E(0,\ii), \|\cdot\|_{E(0,\ii)})$ be a fully symmetric function space. Assume that $E$ is $q$-concave 
for same $1 \leq q < \infty$. Let $T\in DS^+$ and $x\in E(\mc M)$. 
Then the averages (\ref{e13}) converge b.a.u. to some  $\widehat x\in L^1+\mc M$. In addition,  if 
$E$ has  Fatou property, then $\widehat x\in E(\mc M)$.
\end{teo}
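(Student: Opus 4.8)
The plan is to obtain Theorem \ref{t24} as a direct application of Theorem \ref{t13}, using $q$-concavity only to place $x$ inside $\mc R_\tau$ and the Fatou property only to bring the limit back into $E(\mc M)$.

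First I would verify the hypotheses of Theorem \ref{t13}. By \cite[Corollary 2.4.3]{pm}, $q$-concavity of $E$ precludes a sublattice isomorphic to $l^{\infty}$ and hence forces $\|\cdot\|_{E(0,\ii)}$ to be order continuous; by \cite[\S 5]{ddp} the norm of $E(\mc M)$ is then order continuous as well. The crucial consequence is the inclusion $E(\mc M)\su\mc R_\tau$: if $\tau(\chi_{(\lb,\ii)}(|x|))=\ii$ for some $x\in E(\mc M)$ and some $\lb>0$, then $|x|$ would dominate $\lb$ times a projection of infinite trace, on which one can build an order-bounded net decreasing to $0$ whose norms stay bounded away from $0$, contradicting order continuity. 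Thus $\tau(\chi_{(\lb,\ii)}(|x|))<\ii$ for all $\lb>0$, which by the equivalence recorded for $L^0_\tau$ means $\mu_t(x)\to 0$ as $t\to 0$; combined with the always-available embedding $E(\mc M)\su L^1+\mc M$ this gives $x\in\mc R_\tau$. Theorem \ref{t13} now applies verbatim and yields $A_n(x)\to\widehat x$ b.a.u. with $\widehat x\in L^1+\mc M$, which is the first assertion. (Alternatively, since $E(\mc M)$ has order continuous norm, one may invoke Theorem \ref{t22} at this point.)

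For the refined conclusion, suppose in addition that $E$ has the Fatou property; then so does $E(\mc M)$ by \cite[\S 5]{ddp}. Because each $T^k\in DS$ satisfies $T^kx\prec\prec x$ and the functionals $x\mapsto\int_0^s\mu_t(x)\,dt$ are subadditive, the averages (\ref{e13}) obey $A_n(x)\prec\prec x$, whence full symmetry of $E(\mc M)$ gives $\|A_n(x)\|_{E(\mc M)}\leq\|x\|_{E(\mc M)}$ for every $n$; thus the whole sequence lies in a fixed multiple of the unit ball of $E(\mc M)$. Since b.a.u. convergence implies convergence in measure, and since the unit ball of a fully symmetric space with the Fatou property is closed in the measure topology \cite[Theorem 4.1]{ddst}, the limit $\widehat x$ must belong to $E(\mc M)$ (and in fact $\|\widehat x\|_{E(\mc M)}\leq\|x\|_{E(\mc M)}$).

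I expect no serious obstacle: the theorem is essentially a corollary of Theorem \ref{t13}, and the one nontrivial link is the inclusion $E(\mc M)\su\mc R_\tau$, which rests entirely on order continuity of the norm. The conceptual point worth isolating is that $q$-concavity enters only through this order continuity, so none of the Boyd-index machinery is actually needed for the argument; the Fatou property is the separate ingredient responsible for the stronger membership $\widehat x\in E(\mc M)$.
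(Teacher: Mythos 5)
Your proposal is correct and takes essentially the same route as the paper: $q$-concavity yields order continuity of $\|\cdot\|_E$ via \cite[Corollary 2.4.3]{pm}, hence of $\|\cdot\|_{E(\mc M)}$, which gives the inclusion $E(\mc M)\su\mc R_\tau$ so that Theorem \ref{t13} (equivalently, Theorem \ref{t22}) applies, while the Fatou refinement follows from $A_n(x)\prec\prec x$, the resulting bound $\|A_n(x)\|_{E(\mc M)}\leq\|x\|_{E(\mc M)}$, and measure-closedness of the unit ball \cite[Theorem 4.1]{ddst}. You merely fill in details the paper leaves implicit (the verification of $E(\mc M)\su\mc R_\tau$ from order continuity and the Fatou argument), and those details are accurate.
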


5. Let $E=(E(0,\ii), \|\cdot\|_{E(0,\ii)})$  be a fully symmetric function space.
Assume that $E$ has  non-trivial Boyd indices. According to  \cite[II, Ch.2, Proposition 2.b.3]{lt},
there exist such $1<p, q<\ii$ that the space $E$ is intermediate for the
Banach couple $(L^p(0,\ii), L^q(0,\ii))$. Since
$$
(L^p+L^q)(\mc M)=L^p(\mc M)+L^q(\mc M)
$$
(see \cite[Proposition 3.1]{ddp}), we have
$$
 E(\mc M) \su  L^p(\mc M) +  L^q(\mc M) \su \mc R_\tau.
$$
Then  we also have (cf. \cite[Theorem 4.2]{cl})

\begin{teo}\label{t25}
Let $E=(E(0,\ii), \|\cdot\|_{E(0,\ii)})$ be a fully symmetric function space, and let $E$ have non-trivial Boyd indices. 
Given $T\in DS^+$ and $x\in E(\mc M)$, the averages (\ref{e13}) converge b.a.u. to some  $\widehat x\in L^1+\mc M$. In addition,  if $E$ has  Fatou property, then $\widehat x\in E(\mc M)$.
\end{teo}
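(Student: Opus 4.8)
The plan is to deduce the first assertion directly from Theorem \ref{t13} by verifying that $E(\mc M) \su \mc R_\tau$, and then, when $E$ has the Fatou property, to locate the b.a.u.\ limit back in $E(\mc M)$ by a submajorization-plus-Fatou argument.

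First I would assemble the inclusion chain indicated above. Non-triviality of the Boyd indices gives, via \cite[II, Ch.2, Proposition 2.b.3]{lt}, exponents $1 < p, q < \ii$ for which $E$ is intermediate between $L^p(0,\ii)$ and $L^q(0,\ii)$; together with $(L^p+L^q)(\mc M) = L^p(\mc M)+L^q(\mc M)$ from \cite[Proposition 3.1]{ddp} this yields $E(\mc M) \su L^p(\mc M) + L^q(\mc M)$. Since $p, q < \ii$, every element of $L^p(\mc M)$ or $L^q(\mc M)$ satisfies $\tau(\chi_{(\lb,\ii)}(|x|)) < \ii$ for all $\lb > 0$, hence lies in $\mc R_\tau$; thus $E(\mc M) \su \mc R_\tau$. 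Applying Theorem \ref{t13} to $x \in E(\mc M)$ gives b.a.u.\ convergence of the averages (\ref{e13}) to some $\widehat x \in L^1 + \mc M$, which is the first claim.

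For the refinement I would use that each average is itself a Dunford--Schwartz operator: $DS$ is closed under composition and under convex combinations, so $A_n = \frac1n\sum_{k=0}^{n-1}T^k \in DS$, and therefore $A_n(x) \prec\prec x$ by \cite[Theorem 4.7]{ddp}. As $E(\mc M)$ is fully symmetric, this forces $A_n(x) \in E(\mc M)$ with $\| A_n(x)\|_{E(\mc M)} \leq \| x\|_{E(\mc M)}$ for all $n$. Now the Fatou property of $E$ transfers to $E(\mc M)$ \cite[\S 5]{ddp}, and the Fatou property makes the closed balls of $E(\mc M)$ closed in the measure topology \cite[Theorem 4.1]{ddst}. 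Since b.a.u.\ convergence implies convergence in measure, the norm-bounded sequence $\{A_n(x)\}$ converges in measure to $\widehat x$ while staying in the ball $\{y : \| y\|_{E(\mc M)} \leq \| x\|_{E(\mc M)}\}$; closedness of that ball then places $\widehat x \in E(\mc M)$, exactly as the final step of the proof of Theorem \ref{t13} placed it in $L^1+\mc M$.

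The only genuinely delicate step is this last one, and its substance is entirely structural: it rests on the submajorization $A_n(x)\prec\prec x$, which bounds the $E(\mc M)$-norms uniformly, on the passage of the Fatou property from $E$ to $E(\mc M)$, and on the resulting measure-closedness of balls. The remainder is bookkeeping of continuous embeddings, and I anticipate no difficulty with the first assertion once $E(\mc M)\su\mc R_\tau$ is established.
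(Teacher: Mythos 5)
Your proposal is correct and follows essentially the same route as the paper: the paper also obtains $E(\mc M) \su L^p(\mc M)+L^q(\mc M) \su \mc R_\tau$ from the non-trivial Boyd indices via \cite[II, Ch.2, Proposition 2.b.3]{lt} and \cite[Proposition 3.1]{ddp}, and then invokes Theorem \ref{t13}. For the Fatou refinement the paper only gestures at \cite[Theorem 4.2]{cl}, but your argument --- $A_n(x)\prec\prec x$ since $A_n\in DS$, hence norm-boundedness in the fully symmetric space $E(\mc M)$, combined with the Fatou property passing to $E(\mc M)$ and measure-closedness of its balls \cite[Theorem 4.1]{ddst} --- is precisely the intended one, mirroring the final step of the paper's proof of Theorem \ref{t13}.
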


\begin{rem}
It is clear that if one assumes that $\mc M$ has a separable predual, 
Theorems \ref{t21} - \ref{t25} remain valid for the Besicovitch weighted averages (\ref{e14}).
\end{rem}

\end{document}